\documentclass{llncs}
\usepackage{graphicx,amssymb,amsmath,llncsdoc}

\title{Stochastic Flips on Two-letter Words}

\author{Olivier {\sc Bodini}\inst{1} \and Thomas {\sc Fernique}\inst{2} \and Damien {\sc Regnault}\inst{2}}

\institute{
LIP6, CNRS \& Univ. Paris 6,\\
4 place Jussieu 75005 Paris - France,\\
\email{olivier.bodini@lip6.fr}
\and
LIF, CNRS \& Univ. de Provence\\
39 rue Joliot-Curie 13453 Marseille -- France\\
\email{\{thomas.fernique,damien.regnault\}@lif.univ-mrs.fr}
}

\begin{document}

\maketitle

\begin{abstract}
This paper introduces a simple Markov process inspired by the problem of quasicrystal growth.
It acts over two-letter words by randomly performing \emph{flips}, a local transformation which exchanges two consecutive different letters.
More precisely, only the flips which do not increase the number of pairs of consecutive identical letters are allowed.
Fixed-points of such a process thus perfectly alternate different letters.
We show that the expected number of flips to converge towards a fixed-point is bounded by $O(n^3)$ in the worst-case and by $O(n^{5/2}\ln{n})$ in the average-case, where $n$ denotes the length of the initial word.
\end{abstract}

\section*{Introduction}

Tilings are often used as a toy model for quasicrystals, with minimal energy tilings being characterized by local properties called \emph{matching rules}.
In this context, a challenging problem is to provide a theory for quasicrystals growth.
One of the proposed theories relies on a relaxation process (\cite{janot} p. 356): a tiling with many mismatches is progressively corrected by local transformations called \emph{flips}.
Ideally, the tiling eventually satisfies all the matching rules and thus shows a quasicrystalline structure.
It is compatible with experiments, where quasicrystals are obtained from a hot melt by a slow cooling during which flips really occur.
It is however unclear whether only flips can explain successful coolings or if other mechanisms should also be taken into account.
This question is deeply related with the convergence rate of such a flip-correcting process.\\

A relaxation process which aims to be physically realist is described in \cite{aperiodic}.
It considers general cut and project tilings of any dimension and codimension, and performs each flip which modifies by $\Delta E$ the energy of the tiling with a probability depending not only on $\Delta E$ but also on a temperature parameter $T$ such that the stationary distribution (at fixed $T$) is the Boltzmann distribution.
In this paper, we focus on a very rough version of this general process.
First, we consider only tilings of dimension and codimension one, which correspond to two-letter words.
Second, the flips with a corresponding $\Delta E$ being above a fixed threshold are equiprobably performed, while the other flips are simply forbidden.\\

The paper is organized as follows.
In Sec. \ref{sec:settings}, we clearly and formally state the definition of the stochastic process we consider, as well as the main question we are interested in, that is its \emph{convergence time}.
In Sec. \ref{sec:convergence1}, we state and prove the main result of this paper, which shows that the expected number of flips performed by the process to converge is cubic in the length of the initial word (Theorem \ref{th:convergence1}).
The proof mainly relies on a well-chosen function, called \emph{variant}, whose expected value strictly decreases.
This result is then extended in Sec. \ref{sec:convergence2} to the case where the initial words are randomly chosen according to some particular distribution which aims to be physically realist (Theorem \ref{th:convergence2}).
We conclude the paper by a short section discussing perspectives.

\section{Settings}
\label{sec:settings}

A \emph{configuration of length $n$} is a word $w=w_1\ldots w_n$ over $\{1,2\}$ with $|w|_1=|w|_2$, \emph{i.e.}, with as many occurrences of the letter $1$ as of the letter $2$.
Such an object is also sometimes called a \emph{grand Dyck path}, or a \emph{bridge}.
We denote by $\mathcal{W}_n$ the set of configurations of length $n$.\\

It is convenient to represent a configuration $w$ as the broken line of the Euclidean plane linking the points $(k,|w_1\cdots w_k|_1-|w_1\cdots w_k|_2)_{k=0\ldots,|w|}$ (see Fig. \ref{fig:config}).\\

\begin{figure}[hbtp]
\centering
\includegraphics[width=0.8\textwidth]{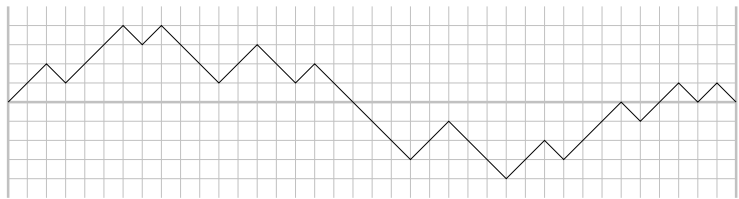}
\caption{The configuration $11211121222112212222211222112111211212$.}
\label{fig:config}
\end{figure}

A \emph{flip} at position $1\leq i<n$ on a configuration $w\in\mathcal{W}_n$ is the local transformation which exchanges $w_i$ and $w_{i+1}$, provided that these letters are different.
The \emph{height} of such a flip is the integer $|w_1\cdots w_{i-1}|_1-|w_1\cdots w_{i-1}|_2$.
Geo\-me\-tri\-cally, performing a flip on a configuration corresponds to move upwards or downwards by $2$ a point of the broken line which represents the configuration.\\

Flip thus acts over configurations, and it is not hard to check that any two configurations with the same length are connected by a sequence of flips.\\

\begin{figure}[hbtp]
\centering
\includegraphics[width=\textwidth]{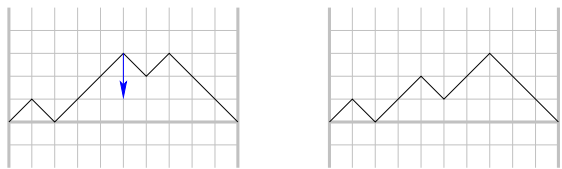}
\caption{A flip of height $2$ performed at position $5$ (leftwards). On the representation, the height and the position of a flip are the abscissa and the ordinate of the center between the two positions of the moved vertex (that is, the middle of the arrow).}
\label{fig:flip}
\end{figure}

A configuration $w$ is said to have a \emph{mismatch} at position $i$ if $w_i=w_{i+1}$.
For example, the configuration on Fig. \ref{fig:config} has $18$ mismatches, while configurations on Fig. \ref{fig:flip} both have $4$ mismatches.
The total number of mismatches of $w$ is denoted by $E(w)$.
One easily checks that it ranges from $0$ for configurations $(12)^n$ and $(21)^n$ to $2(n-1)$ for configurations $1^n2^n$ and $2^n1^n$.
Physically, $E(w)$ can be though as the \emph{energy} of the configuration $w$, with the configurations $(12)^n$ and $(21)^n$ thus being ground states.\\

We now define a Markov chain on $\mathcal{W}_n$ that we call \emph{cooling process}.
It starts from $w_0\in\mathcal{W}_n$ and produces a sequence $(w_t)_{t=1,2,\ldots}$ defined by:
$$
w_{t+1}=\left\{\begin{array}{ll}
w_t & \textrm{if }E(w_t)=0,\\
w' & \textrm{otherwise},
\end{array}\right.
$$
where $w'$ is obtained by performing on $w_t$ a flip uniformly chosen among the flips which do not increase the number of mismatches\footnote{In other words, the $\Delta E$ threshold discussed into the introduction is equal to zero.}.
Note that a flip modifies the number of mismatches by at most $2$.
Flips which decrease the number of mismatches are said to be \emph{irreversible} (they cannot be performed back) while the ones which do not modify the number of mismatches are said to be \emph{reversible}.

\begin{figure}[hbtp]
\centering
\includegraphics[width=0.8\textwidth]{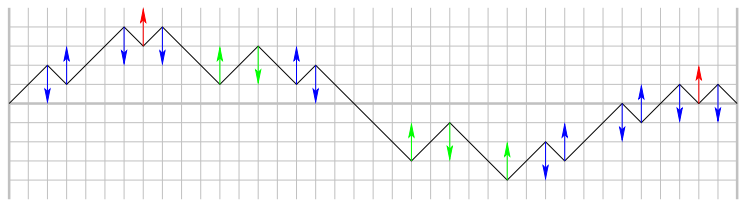}
\caption{Flips can increase the number of mismatches (red arrows), let it unchanged (blue arrows) or decrease it (green arrows).
The two last are respectively called reversible and irreversible flips: they are the only ones performed during the cooling process.}
\label{fig:process}
\end{figure}

Since the number $E(w_t)$ of mismatches is non-increasing, it is natural to ask whether it reaches zero or not, and at which rate.
We therefore introduce the \emph{convergence time} of this process, which is the random variable $T$ counting the number of steps required to transform the initial configuration $w_0$ into one of the two stable configurations without any mismatch, that is
$$
T:=min\{t\geq 0~|~E(w_t)=0\}.
$$
Then, the \emph{worst expected convergence time} is defined by
$$
\widehat{T}(n):=\max_{w\in \mathcal{W}_n} \mathbb{E}(T|w_0=w).
$$
We are not only interested in the worst case, but also in the average case.
Given a distribution $\mu$ over $\mathcal{W}_n$, we define the \emph{$\mu$-averaged expected convergence time} by
$$
T_\mu(n):=\sum_{w\in\mathcal{W}_n} \mu(w)\mathbb{E}(T|w_0=w).
$$

\section{Bounding the worst expected convergence time}
\label{sec:convergence1}

In this section, we prove the main result of this paper:

\begin{theorem}\label{th:convergence1}
The worst expected convergence time is cubic:
$$
\widehat{T}(n)=O(n^3).
$$
\end{theorem}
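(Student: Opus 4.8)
The plan is to use a potential-function (variant) argument. The natural first candidate is the energy $E(w)$ itself, but since reversible flips do not change $E$, a straightforward martingale/drift argument on $E$ alone cannot bound the time spent among configurations of a fixed energy level. So I would look for a secondary quantity that strictly decreases in expectation even when $E$ is constant, and combine the two into a single variant $\Phi(w)$, say of the form $\Phi(w) = A\cdot n^2\cdot E(w) + \Psi(w)$ for a suitable auxiliary functional $\Psi$ bounded by $O(n^2)$ (or $O(n^3)$), chosen so that each irreversible flip decreases the $E$-term by more than the worst-case increase of $\Psi$, and each reversible flip decreases $\mathbb{E}[\Psi]$ strictly.

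The key steps, in order, would be: (1) describe the set of legal (reversible plus irreversible) flips available at a configuration $w$ combinatorially — in the broken-line picture these correspond to local patterns (up-down or down-up "bumps" that are not too isolated), and count them, since the process picks uniformly among them; (2) define $\Psi$ — a promising choice is something like a weighted sum over vertices of the broken line of their heights, or the sum of squared heights, or a quantity measuring the total "spread" of the path, e.g. $\sum_i h_i^2$ where $h_i$ is the height after $i$ letters; compute the expected one-step change of $\Psi$ conditioned on the current configuration, using symmetry of the uniform choice among legal flips; (3) show that on a configuration with $E(w)>0$ the expected decrease of the full variant $\Phi$ per step is bounded below by a constant (or by $\Omega(1/n)$), by case analysis on whether an irreversible flip is available and, if only reversible flips are available, by showing $\Psi$ still has strictly negative drift of magnitude $\Omega(1/n^2)$ or so; (4) conclude by the standard optional-stopping / additive-drift lemma that $\mathbb{E}(T \mid w_0=w) \le \Phi(w)/\delta = O(n^3)$, using $\Phi = O(n^2\cdot n) = O(n^3)$ and drift $\delta = \Omega(1)$, or the appropriate trade-off if the drift is only $\Omega(1/n)$ against a variant of size $O(n^2)$.

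The main obstacle I anticipate is step (2)–(3): finding an auxiliary $\Psi$ whose expected value provably decreases under a reversible flip chosen uniformly among all currently-legal flips. A reversible flip just slides one "bump" of the broken line up or down by $2$ without changing $E$, and locally such a move looks like an unbiased random walk of that bump; the bias that forces convergence must come from the boundary interaction with the rest of the path and from the asymmetry in which flips are legal. Capturing this bias quantitatively — i.e. proving $\mathbb{E}[\Psi(w_{t+1}) - \Psi(w_t) \mid w_t] \le -\delta$ with $\delta$ large enough to give $O(n^3)$ rather than a weaker bound — is the crux, and will likely require a careful global argument (perhaps an interchange/coupling argument, or a clever telescoping identity for $\Psi$ summed against the flip-availability structure) rather than a purely local estimate. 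I would also need to handle the normalization carefully, since the number of available flips varies with $w$ and enters the denominator of every conditional expectation, so the drift bound must be uniform over all configurations with positive energy.
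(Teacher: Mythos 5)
Your high-level framework is exactly the one the paper uses: define a real-valued variant $\Phi$ with strictly negative expected one-step drift whenever $E(w)>0$, then invoke an additive-drift / optional-stopping lemma (the paper cites it as Proposition~\ref{prop:fmst}) to conclude $\mathbb{E}(T)\le \Phi(w_0)/\varepsilon$. You also correctly diagnose why $E$ alone fails and correctly identify that the whole difficulty is to manufacture a functional whose drift is negative even when only reversible flips are available. But that is precisely where your proposal stops: step (2)--(3), which you yourself flag as the crux, is left unresolved, and the candidates you float do not obviously work.

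Concretely, take $\Psi(w)=\sum_i h_i^2$ where $h_i$ is the height after $i$ letters. A reversible flip moves one local extremum at height $h$ by $\pm 2$, so $\Delta\Psi=-4|h|+4$ when the vertex moves towards the axis and $\Delta\Psi=+4|h|+4$ when it moves away. A reversible flip can move a vertex either way, and there is no reason the uniform average over all currently-legal flips is bounded away from zero by a configuration-independent $\varepsilon$; worse, many reversible flips near the axis (at $|h|\in\{0,1\}$) contribute $\Delta\Psi\ge 0$. Similarly, the additive form $\Phi=An^2E+\Psi$ does not help on configurations that admit only reversible flips, since there the $E$-term contributes nothing and you are back to needing a negative drift for $\Psi$ alone. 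The paper's solution is of a genuinely different shape: it sets
$$
\phi_\alpha(w)=\sum_{v\in\mathrm{DF}(w)}\bigl(1+|v|_1\bigr)^\alpha,\qquad \alpha\in(0,1),
$$
where the sum runs over the \emph{maximal Dyck factors} of $w$ at every height (not just the sum-of-squares of the profile). Each reversible flip lengthens or shortens exactly one maximal Dyck factor by one unit in the $|v|_1$-count; the concavity of $x\mapsto x^\alpha$ is then exploited by pairing every flip that lengthens a factor of size $p$ with a neighbouring flip that shortens a factor of size $q\le p$, so that the pair's average effect is $\le -\tfrac{\alpha(1-\alpha)}{2}n^{\alpha-2}$, and irreversible flips are shown to help at least as much as two reversible ones. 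Combined with $\phi_\alpha(w)=O(n^{\alpha+1})$ this gives $O(n^3)$. So your plan is sound in outline, but the essential idea --- decomposing into maximal Dyck factors and weighting them concavely so that the pairing argument produces a uniform negative drift --- is missing, and without some such construction the proof does not go through.
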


We will rely on the following proposition (proven, \emph{e.g.}, in \cite{fmst}) which bounds from above the expected convergence time of so-called \emph{variants}:

\begin{proposition}\label{prop:fmst}
Consider a Markov chain $(x_t)_{t\geq 0}$ over a space $\Omega$ and a positive real map $\phi:\Omega\to[a,b]$, called \emph{variant}.
Assume that there is $\varepsilon>0$ such that whenever $\phi(x_t)>a$, $\mathbb{E}[\phi(x_{t+1})-\phi(x_t)|x_t]\leq -\varepsilon$.
Then, the expected value of the random variable $T:=\min\{t|\phi(x_t)= a\}$ satisfies
$$
\mathbb{E}(T)\leq \frac{\mathbb{E}(\phi(x_0))}{\varepsilon}.
$$
\end{proposition}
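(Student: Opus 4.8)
The plan is to run the classical drift argument against the \emph{stopped} chain, which avoids having to assume a priori that $T$ is finite. Write $\mathcal{F}_t:=\sigma(x_0,\dots,x_t)$ and set $y_t:=x_{t\wedge T}$, so that $\phi(y_t)$ stays in $[a,b]$, equals $\phi(x_0)$ at $t=0$, and is frozen at the value $a$ once $T$ is reached; this freezing is exactly what converts a per-step drift into a bound on $\mathbb{E}(T)$.

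First I would establish the one-step inequality
$$
\mathbb{E}\!\left[\phi(y_{t+1})\mid\mathcal{F}_t\right]\le \phi(y_t)-\varepsilon\,\mathbf{1}_{\{T>t\}}.
$$
On $\{T\le t\}$ it is an equality, since $y_{t+1}=y_t$. On $\{T>t\}$, which is $\mathcal{F}_t$-measurable, one has $y_t=x_t$ with $\phi(x_t)>a$ and $y_{t+1}=x_{t+1}$, so the Markov property gives $\mathbb{E}[\phi(x_{t+1})-\phi(x_t)\mid\mathcal{F}_t]=\mathbb{E}[\phi(x_{t+1})-\phi(x_t)\mid x_t]\le-\varepsilon$ by hypothesis. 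Taking expectations, telescoping from $0$ to $N-1$, and using $\phi\ge a\ge 0$ then yields
$$
\varepsilon\sum_{t=0}^{N-1}\mathbb{P}(T>t)\le\mathbb{E}[\phi(x_0)]-\mathbb{E}[\phi(y_N)]\le\mathbb{E}[\phi(x_0)].
$$
Letting $N\to\infty$ and using $\mathbb{E}(T)=\sum_{t\ge0}\mathbb{P}(T>t)$ (valid for the non-negative integer variable $T$, both sides possibly infinite a priori) gives $\varepsilon\,\mathbb{E}(T)\le\mathbb{E}(\phi(x_0))$, which is the claim, and in particular $T<\infty$ almost surely.

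There is no real obstacle; the only points deserving attention are the use of the Markov property to upgrade the hypothesis from conditioning on $x_t$ to conditioning on the full history $\mathcal{F}_t$, and the choice to argue with $y_t=x_{t\wedge T}$ rather than with $x_t$ itself, which keeps the argument valid without presupposing $T<\infty$ and without requiring $\phi$ to be constant after $T$ for the original chain. One could equivalently phrase the same computation by noting that $\phi(y_t)+\varepsilon\,(t\wedge T)$ is a non-negative supermartingale and invoking optional stopping, but the elementary telescoping above is shorter and entirely self-contained.
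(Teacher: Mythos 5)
Your proof is correct and complete. There is nothing in the paper to compare it against: the authors do not prove this proposition but simply cite \cite{fmst} for it, so your write-up actually supplies the missing argument. What you do is the standard drift (Foster--Lyapunov type) argument, and you treat the two genuinely delicate points properly: the drift hypothesis is stated with conditioning on $x_t$ only, and you upgrade it to conditioning on the full history $\mathcal{F}_t$ via the Markov property; and by working with the stopped chain $y_t=x_{t\wedge T}$ and the identity $\mathbb{E}(T)=\sum_{t\ge 0}\mathbb{P}(T>t)$ you avoid assuming $T<\infty$ a priori, obtaining it as a consequence. Your observation that the computation is equivalent to optional stopping for the supermartingale $\phi(y_t)+\varepsilon\,(t\wedge T)$ is also accurate. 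A minor remark: since $\phi(y_N)\geq a$, your telescoping in fact gives the slightly sharper bound $\mathbb{E}(T)\leq\bigl(\mathbb{E}(\phi(x_0))-a\bigr)/\varepsilon$; discarding $a\geq 0$ yields exactly the stated inequality, which is all that is needed for Theorems \ref{th:convergence1} and \ref{th:convergence2}.
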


Variants are thus sort of ``strict'' supermartingales, and we need to define a suitable one in order to bound the expected convergence time towards mismatch-free configurations.
Since the number $E$ of mismatches is non-increasing by definition of the cooling process, we are naturally tempted to use it as a variant.
However, there are configurations having only reversible flips, whose expected variation of mismatches is thus equal to zero (as, for example, $w=1211212212$).
In order to refine, we introduce the notion of \emph{Dyck factors} (see Fig. \ref{fig:dyck_factor}):

\begin{definition}
Consider a configuration $w=p\cdotp v\cdotp s$.
The factor $v=v_1\cdots v_k$ is said to be a \emph{positive Dyck factor} of \emph{length} $k$ and \emph{height} $|p|_1-|p|_2$ if:
$$
|v|_1=|v|_2,\qquad
\forall i\in\{1,\ldots,k\},~|v_1\cdots v_i|_1\geq |v_1\cdots v_i|_2,\qquad
|p|_1-|p|_2\geq 0.
$$
Exchanging letters $1$ and $2$ in the above conditions defines a \emph{negative Dyck factor}.
A Dyck factor is \emph{maximal} if no Dyck factor with the same height contains it.
\end{definition}

\begin{figure}[hbtp]
\centering
\includegraphics[width=0.8\textwidth]{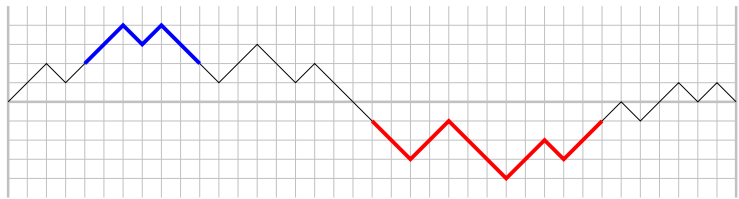}
\caption{A positive maximal Dyck factor of height $2$ and length $6$ (in blue), and a negative maximal Dyck factor of height $-1$ and length $12$ (in red).
}
\label{fig:dyck_factor}
\end{figure}

Let us stress that Dyck factors are either positive or negative, \emph{i.e.}, their representations do not cross the $y=0$ axis.
In particular, any configuration can be decomposed in its maximal Dyck factors of height zero, with signs alternating.\\

\noindent We use Dyck factors to define our variant, which depends on a parameter $\alpha$:

\begin{definition}\label{def:variant}
For $\alpha\in(0,1)$, let $\phi_\alpha$ be defined on a configuration $w$ by:
$$
\phi_\alpha(w)=\sum_{v\in \textrm{DF}(w)} (1+|v|_1)^\alpha,
$$
where $\textrm{DF}(w)$ denotes the set of maximal Dyck factors of $w$.
\end{definition}

Contrary to the number $E$ of mismatches, $\phi_\alpha$ can be increased by some flips.
However, the concavity of $x\to x^\alpha$, which gives to small Dyck factors a weight proportionally bigger than the weight given to long Dyck factors, will ensure that the average variation over all the flips of a configuration is always negative.
More precisely:

\begin{lemma}\label{lem:decreasing_variant}
Let $w\in\mathcal{W}_{2n}$.
If $E(w_t)>0$, then the variant $\phi_\alpha$ satisfies
$$
\mathbb{E}(\phi_\alpha(w_{t+1})-\phi_\alpha(w_t)|w_t)\leq -\frac{\alpha(1-\alpha)}{2}n^{\alpha-2}.
$$
\end{lemma}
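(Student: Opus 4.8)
The plan is to estimate the one-step drift of $\phi_\alpha$ by averaging the variation over all admissible flips. A flip is determined by one of the $2n-1$ positions, so the number $N$ of admissible flips at $w_t$ satisfies $N<2n$; hence it suffices to prove that the sum $\Sigma:=\sum_{f}\bigl(\phi_\alpha(f(w_t))-\phi_\alpha(w_t)\bigr)$ over the admissible flips $f$ satisfies $\Sigma\le-\alpha(1-\alpha)n^{\alpha-1}$, since this is negative and therefore $\mathbb{E}\bigl(\phi_\alpha(w_{t+1})-\phi_\alpha(w_t)\mid w_t\bigr)=\Sigma/N\le\Sigma/(2n)\le-\tfrac{\alpha(1-\alpha)}{2}n^{\alpha-2}$. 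Throughout I would invoke the symmetry exchanging the two letters: it reflects the broken line across the $x$-axis, preserves both the cooling process and the value of $\phi_\alpha$ (turning positive Dyck factors into negative ones of the same length) and swaps peak-lowering flips with valley-raising flips, so one may argue "from above".

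The first step is a purely local analysis of one admissible flip. Writing $\phi_\alpha=\sum_{\ell}\phi_\alpha^{(\ell)}$, where $\phi_\alpha^{(\ell)}(w)$ sums $(1+|v|_1)^\alpha$ over the maximal Dyck factors of height $\ell$, one checks that lowering a peak of height $m$ modifies $\phi_\alpha^{(\ell)}$ only for $\ell=m-1$: the valley of height $m-2$ it creates is isolated and supports no Dyck factor, and at every other level the moved vertex keeps its membership in the relevant halfplane $\{y\ge\ell\}$ or $\{y\le\ell\}$. At that single level the variation is of exactly one of three kinds: (i) the peak is a unit bump at one end of a maximal Dyck factor $v$, and lowering it deletes that bump, so $\phi_\alpha$ changes by $(|v|_1)^\alpha-(1+|v|_1)^\alpha$, or by $-2^\alpha$ when $v$ consisted of that single bump; (ii) two maximal Dyck factors of half-lengths $k,k'$ are merged across the valley, so $\phi_\alpha$ changes by $(2+k+k')^\alpha-(1+k)^\alpha-(1+k')^\alpha$, which is $\le 0$ by subadditivity of $t\mapsto t^\alpha$; or (iii) an isolated extremum is absorbed into a neighbouring maximal Dyck factor $v$, raising $\phi_\alpha$ by $(2+|v|_1)^\alpha-(1+|v|_1)^\alpha\le 3^\alpha-2^\alpha$. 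Here is where the choice of which flips the cooling process allows is essential: the two flips that would produce a large positive variation — lowering a peak lying strictly inside a Dyck factor (splitting it into two long pieces) and raising a valley flanked by two isolated peaks (creating a bump out of nothing), together with their mirror images — are precisely the flips that raise the number of mismatches, hence are forbidden; this is a one-line check on the four letters around the flip.

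It remains to bound $\Sigma$, the signed total of these contributions. I would group the admissible flips according to which maximal Dyck factors they act upon and exploit the nesting of Dyck factors: inside a "cluster" of consecutive features (maximal Dyck factors and isolated extrema) separated by unit valleys, each feature is incident to a bounded number of type-(i) and type-(iii) flips, whose contributions telescope into second differences $(j+1)^\alpha-2j^\alpha+(j-1)^\alpha<0$ of the concave map $t\mapsto t^\alpha$; together with the nonpositive type-(ii) terms and an induction on the levels that reassigns a Dyck factor's compensation to the flip lowering its topmost peak, this makes the contribution of each cluster nonpositive, so $\Sigma\le 0$. For the strict decrease, $E(w_t)>0$ forces the broken line to have a peak above the $x$-axis or a valley below it; taking the leftmost one at the extreme height and chasing the ascending (respectively descending) staircase leading to it exhibits an admissible flip whose contribution is $-2^\alpha$ when that extremum is a thin spike and $(k)^\alpha-(1+k)^\alpha$ with $2\le k\le n$ otherwise, which, via the expansion $(1+t)^\alpha=t^\alpha+\alpha t^{\alpha-1}-\tfrac{\alpha(1-\alpha)}{2}t^{\alpha-2}+\cdots$, is at most $-\alpha(1-\alpha)n^{\alpha-1}$ (the spike case being far more than enough); since the remaining clusters only add nonpositive terms, $\Sigma\le-\alpha(1-\alpha)n^{\alpha-1}$.

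The crux of the difficulty is this last paragraph: making the assignment "which flip compensates which increase" precise and injective, carrying out the level induction cleanly, and handling the several degenerate configurations — a maximal Dyck factor abutting an end of the word (where an edge flip may itself be forbidden), a cluster straddling the line $y=0$ where a positive and a negative factor meet, an isolated extremum below "sea level" — in which the tidy unit-bump / unit-valley picture breaks down and the formulas above require small corrections. Everything else is the elementary calculus of $t\mapsto t^\alpha$ already used: subadditivity, the monotonicity of $t\mapsto(1+t)^\alpha-t^\alpha$, and the sign and size of its second difference.
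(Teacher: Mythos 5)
The very first reduction already fails. You replace the target bound on the one-step drift $\mathbb{E}[\phi_\alpha(w_{t+1})-\phi_\alpha(w_t)\mid w_t]=\Sigma/N$ by the sufficient condition $\Sigma\le-\alpha(1-\alpha)n^{\alpha-1}$, via $\Sigma/N\le\Sigma/(2n)$. This step throws away exactly the information that the paper exploits: when $N$ is much smaller than $2n$, the average $\Sigma/N$ is far more negative than $\Sigma/(2n)$, and the sum $\Sigma$ itself need not be anywhere near $-\alpha(1-\alpha)n^{\alpha-1}$. Here is a concrete counterexample, the paper's own example $w=1211212212$ ($2n=10$, $n=5$, $E(w)=2$). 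The admissible flips are exactly at positions $2,4,6,8$ (so $N=4$), all reversible, with $\phi_\alpha$-variations $4^\alpha-3^\alpha$, $2^\alpha-3^\alpha$, $2^\alpha-3^\alpha$, $4^\alpha-3^\alpha$. Hence $\Sigma=2\bigl(4^\alpha+2^\alpha-2\cdot3^\alpha\bigr)$. At $\alpha=\tfrac12$ this is $\Sigma\approx-0.10$, while $\alpha(1-\alpha)n^{\alpha-1}=\tfrac14\cdot5^{-1/2}\approx0.112$. So $\Sigma>-\alpha(1-\alpha)n^{\alpha-1}$: your sufficient condition is false, even though the lemma's conclusion is satisfied ($\Sigma/N\approx-0.025\le-\frac{\alpha(1-\alpha)}{2}n^{\alpha-2}\approx-0.011$). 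In short, the lemma is a statement about a \emph{per-flip average}, and bounding the raw sum by a quantity of order $n^{\alpha-1}$ is strictly stronger than what is true.

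The paper's proof sidesteps this by never passing to $\Sigma$ globally: it partitions the admissible flips into small groups (a pair of reversible flips, a trio around an irreversible flip, or a singleton near a zero-crossing) and shows that the \emph{average within each group} is at most $-\frac{\alpha(1-\alpha)}{2}n^{\alpha-2}$; since the overall expectation is a convex combination of group averages, the bound follows with no dependence on $N$. Your local case analysis of a flip (types (i), (ii), (iii)) is in the right spirit and essentially matches the paper's three computations, and the second-difference / concavity estimate is the same calculus the paper uses. But two further gaps remain even if the reduction were repaired: (a) the ``cluster total $\le 0$'' grouping is only sketched, and you yourself flag it as the crux; (b) the final step adds the negativity of one exhibited flip on top of ``the remaining clusters are nonpositive,'' but that flip may itself be the compensator inside its cluster, so its contribution is not free to be counted once more. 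The clean fix is the paper's: group flips so that increases are matched to at-least-as-large decreases, bound each group's \emph{average}, and observe that singletons near the $y=0$ axis are automatically small enough.
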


\begin{proof}
Consider a configuration $w=w_1\cdots w_{2n}$, with $E(w)>0$.
This ensures that at least one flip (reversible or irreversible) can be performed on $w$.\\

Let us first assume that only reversible flips can be performed on $w$, that is, neither $1122$ nor $2211$ are factors of $w$.
This is equivalent to say that maximal Dyck factors contain at least four letters.
We will group these reversible flips by pairs and prove that the average variation of $\phi_\alpha$ over each pair is bounded from above by $\frac{\alpha(1-\alpha)}{2}n^{\alpha-2}$.\\
Each flip of positive height which transforms $21$ into $12$ (it increases $\phi_\alpha$) is between two positive flips which transform $12$ into $21$ (they decrease $\phi_\alpha$).
Reversibility ensures that one of these two flips has the same high as the central flip, while the other one is higher; we group the central flip and its higher neighbor.
We proceed symetrically for flips of negative height.
Fig. \ref{fig:pairing} illustrates this.\\

\begin{figure}[hbtp]
\centering
\includegraphics[width=0.8\textwidth]{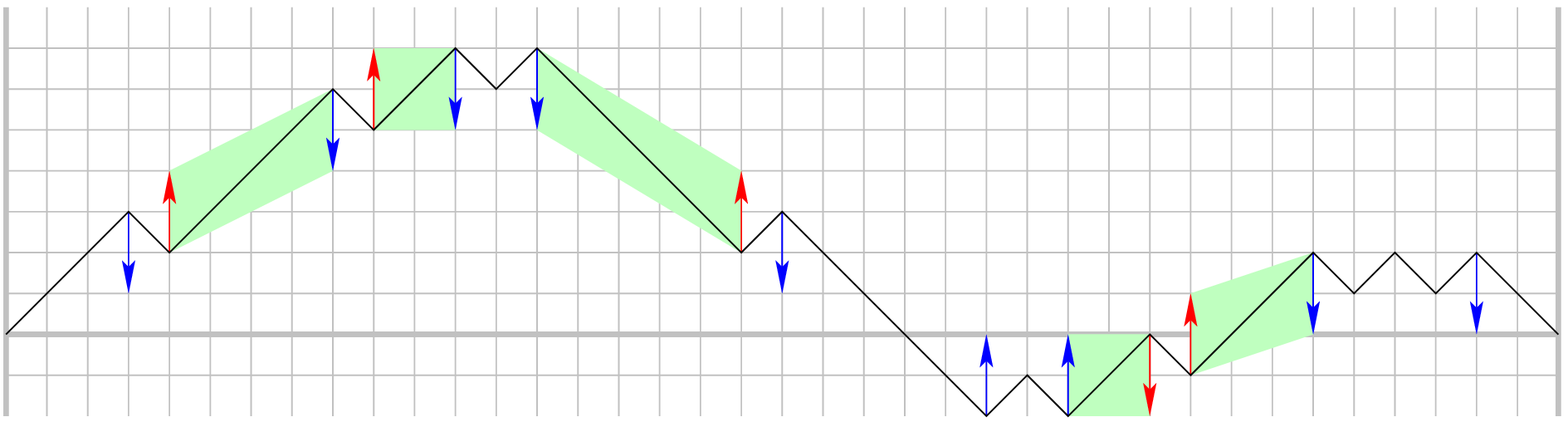}
\caption{
Each flip which increases $\phi_\alpha$ (red arrows) can be paired (light green boxes) with a flip which decreases $\phi_\alpha$ (blue arrows), such that the average variation of $\phi_\alpha$ over the two flips is negative.
Isolated flips can only decrease $\phi_\alpha$.}
\label{fig:pairing}
\end{figure}

Consider two paired reversible flips, say of positive height (the negative height case is symmetric).
Performing the lowest flip increases by $1$ the number $p$ of $1$'s in the maximal positive Dyck factor of $w$ starting at the first letter between the pair of flips, while performing the highest flip decreases by $1$ the number $q$ of $1$'s in some other maximal positive Dyck factor of $w$; one moreover has $p\geq q$ since the latter Dyck factor is higher than the former one (see Fig. \ref{fig:delta_pair}).

\begin{figure}[hbtp]
\centering
\includegraphics[width=0.8\textwidth]{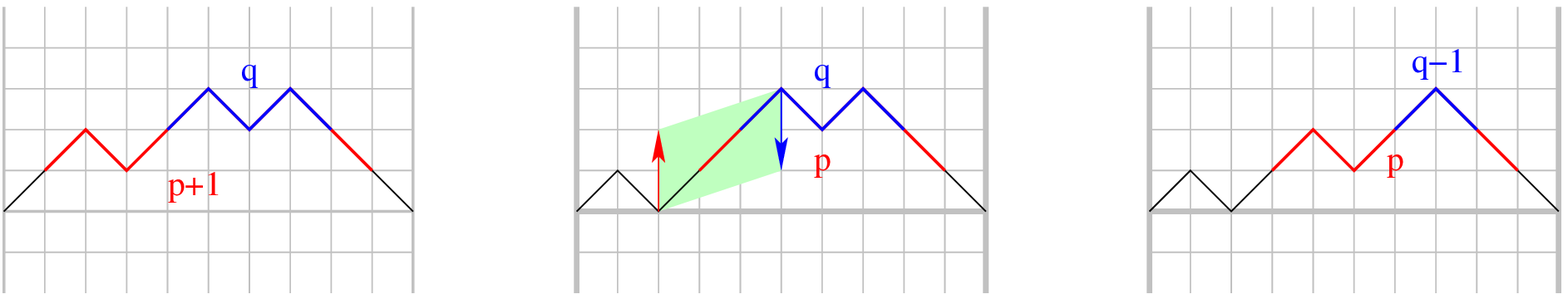}
\caption{
Consider two paired flips (central configuration): the maximal Dyck factor extended by the flip increasing $\phi_\alpha$ (red arrow, performed leftwards) is greater than the one shortened by the flip decreasing $\phi_\alpha$ (blue arrow, performed rightwards), \emph{i.e.}, $p\geq q$.
}
\label{fig:delta_pair}
\end{figure}

\noindent Thus, the average variation of $\phi_\alpha$ is
$$
\Delta:=\frac{1}{2}\left((p+2)^\alpha-(p+1)^\alpha+q^\alpha-(q+1)^\alpha\right).
$$
Since $x\to x^\alpha$ is concave for $\alpha\in (0,1)$, $p\geq q$ yields
$$
\Delta\leq \Delta':=\frac{1}{2}\left((p+2)^\alpha-(p+1)^\alpha+p^\alpha-(p+1)^\alpha\right).
$$
For $k\geq 1$, let $u_k=\frac{\alpha(\alpha-1)\ldots(\alpha-k+1)}{k!}$.
Note that $u_{2k}< 0$.
One computes:
\begin{eqnarray*}
\frac{2\Delta'}{(p+1)^\alpha}
&=&\left(1+\frac{1}{p+1}\right)^\alpha+\left(1-\frac{1}{p+1}\right)^\alpha-2\\
&=&1+\sum_{k\geq 1}\frac{u_k}{(p+1)^k}+1+\sum_{k\geq 1}(-1)^k\frac{u_k}{(p+1)^k}-2\\
&=&\sum_{k\geq 1}2\frac{u_{2k}}{(p+1)^{2k}}
\leq 2\frac{u_2}{(p+1)^2}
=\frac{\alpha(\alpha-1)}{(p+1)^2}.
\end{eqnarray*}
With $p<n$, this yields the claimed bound for the pair:
$$
\Delta\leq \Delta'
\leq \frac{(p+1)^\alpha}{2}\times\frac{\alpha(\alpha-1)}{(p+1)^2}
= -\frac{\alpha(1-\alpha)}{2}(p+1)^{\alpha-1}\leq -\frac{\alpha(1-\alpha)}{2}n^{\alpha-2}.
$$
Last, note that there are some flips which have not been paired, namely the ones which are immediatly after or before a crossing of the $y=0$ axis.
They however do not cause trouble, because each of them decreases a maximal Dyck factor of length at most $n$, hence decreases $\phi_\alpha$ by at least
$$
(n-1)^\alpha-n^\alpha\leq -\alpha n^{\alpha-1}\leq  -\frac{\alpha(1-\alpha)}{2}n^{\alpha-2}.
$$
Let us now consider the general case, \emph{i.e.}, when there are also irreversible flips.
We will show that an irreversible flip increases $\phi_\alpha$ lesser than (resp. decreases $\phi_\alpha$ more than) two reversible flips do.
Hence, the average variation of $\phi_\alpha$ can only be smaller than in the previous case (which is already negative).
Intuitively, it is as if each irreversible flip is splitted into two reversible flips before performing the above pairing process.\\
Let us be more precise.
Consider, first, the case of a positive flip which transforms $21$ into $12$.
It replaces two maximal Dyck factors, say with respectively $p$ and $q$ letters $1$, by one with $p+q+1$ letters $1$ (see Fig. \ref{fig:groupe_flips_2}).
Thus, on the one hand:
$$
\Delta=(p+q+2)^\alpha-(p+1)^\alpha-(q+1)^\alpha.
$$
On the other hand, the variation that would result of two reversible flips is:
$$
\Delta'=(p+2)^\alpha-(p+1)^\alpha+(q+2)^\alpha-(q+1)^\alpha.
$$
One thus has $\Delta\leq\Delta'$, since for $0<\alpha<1$:
$$
(p+2)^\alpha+(q+2)^\alpha\geq((p+2)+(q+2))^\alpha=(p+q+4)^\alpha\geq(p+q+2)^\alpha.
$$

\begin{figure}[hbtp]
\centering
\includegraphics[width=0.6\textwidth]{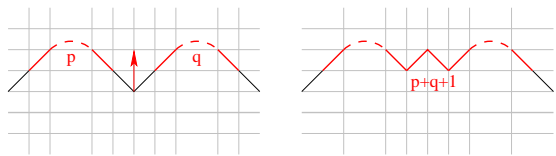}
\caption{An irreversible flip which replaces two maximal Dyck factors with respectively $p$ and $q$ letters $1$ by a single maximal Dyck factor with $p+q+1$ letters $1$.}
\label{fig:groupe_flips_2}
\end{figure}

We then consider the case of a positive irreversible flip which transforms $12$ into $21$.
This just removes a maximal Dyck factor with one letter $1$ (see Fig. \ref{fig:groupe_flips_3}).
Thus, on the one hand:
$$
\Delta=-2^\alpha.
$$

\begin{figure}[hbtp]
\centering
\includegraphics[width=0.6\textwidth]{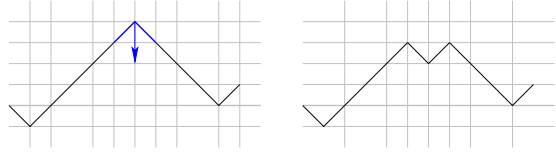}
\caption{An irreversible flip which removes a maximal Dyck factor with one letter $1$.}
\label{fig:groupe_flips_3}
\end{figure}

On the other hand, each of the two neighbor flips of this flip (one on the left, the other on the right, both transforming $21$ into $12$) either decreases the variant if it is negative, or increases it by:
$$
\Delta'=(p+2)^\alpha-(p+1)^\alpha\leq 3^\alpha-2^\alpha.
$$
The average variation of $\phi_\alpha$ is thus bounded from above by:
$$
\frac{2\Delta'+\Delta}{3}\leq \frac{2(3^\alpha-2^\alpha)-2^\alpha}{3}.
$$
A computation shows that, for $n\geq 2$, this quantity is lesser than $-\frac{\alpha(1-\alpha)}{2}n^{\alpha-2}$ for any $\alpha\in(0,1)$, with equality for $\alpha=1$.
The case of a negative irreversible flip is symmetric.
This concludes the proof.\hfill\qed
\end{proof}

This lemma thus provides a suitable $\varepsilon$ (depending on $\alpha$ and $n$) for Prop. \ref{prop:fmst}.
Let us now bound $\phi_\alpha$ and show that it is minimal for mismatch-free configurations:

\begin{lemma}\label{lem:initial_bound}
For any configuration $w\in\mathcal{W}_{2n}$, one has
$$
(1+n)^\alpha\leq \phi_\alpha(w)\leq (2n)^{\alpha+1},
$$
with the lower bound beeing reached if and only if $E(w)=0$.
\end{lemma}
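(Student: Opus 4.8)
The plan is to establish the two bounds separately and then characterise when equality holds in the lower one.

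For the \emph{upper bound} I would bound both the summands and their number in $\phi_\alpha(w)=\sum_{v\in\textrm{DF}(w)}(1+|v|_1)^\alpha$. Every maximal Dyck factor $v$ is a factor of $w$, so $|v|_1\le|w|_1=n$ and hence $(1+|v|_1)^\alpha\le(1+n)^\alpha\le(2n)^\alpha$ (using $n\ge1$). It then suffices to check that $\textrm{DF}(w)$ has at most $2n$ elements, and for this I would argue that two distinct maximal Dyck factors cannot start at the same position of $w$: a common starting position means a common prefix, hence a common height; two Dyck factors of the same height, one containing the other, contradict the maximality of the shorter; and they cannot have opposite signs, since opposite signs force the common height to be $0$ and then the first letter after the common prefix would have to be simultaneously $1$ (positive factor) and $2$ (negative factor). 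Therefore $|\textrm{DF}(w)|\le 2n$ and $\phi_\alpha(w)\le 2n\,(2n)^\alpha=(2n)^{\alpha+1}$.

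For the \emph{lower bound} I would keep only the maximal Dyck factors of height $0$. As recalled right after the definition of Dyck factors, these factors $v_1,\dots,v_m$ partition $w$ with alternating signs, so $\sum_j|v_j|_1=|w|_1=n$ and each $|v_j|_1\ge1$. Since $t\mapsto t^\alpha$ is subadditive on $[0,\infty)$ for $\alpha\in(0,1)$, writing $1+n=(1+|v_1|_1)+|v_2|_1+\dots+|v_m|_1$, applying subadditivity termwise, and then using $|v_j|_1^\alpha\le(1+|v_j|_1)^\alpha$, I obtain $(1+n)^\alpha\le\sum_j(1+|v_j|_1)^\alpha\le\phi_\alpha(w)$, the last step because the $v_j$ are among the maximal Dyck factors of $w$ and the remaining summands are nonnegative.

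Finally I would track the equality case. $\phi_\alpha(w)=(1+n)^\alpha$ forces $m=1$ (otherwise the subadditivity step is strict, as $|v_2|_1\ge1>0$) and $\textrm{DF}(w)=\{v_1\}$ (otherwise some extra summand $(1+|v|_1)^\alpha\ge2^\alpha>0$ would be present), so $w$ is a single maximal Dyck factor of length $2n$; up to exchanging the two letters, all heights of $w$ are $\ge0$. If $w\ne(12)^n$, then the maximal height of $w$ is at least $2$, so one of the maximal runs of positions of height $\ge1$ reaches height $\ge2$; the subpath delimited by the two height-$1$ endpoints of that run is a nonempty positive Dyck factor of height $1$, which extends to a maximal one, i.e.\ a maximal Dyck factor of height $1\ne0$ --- contradicting $\textrm{DF}(w)=\{v_1\}$. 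Hence $w=(12)^n$, so $E(w)=0$; conversely, if $E(w)=0$ then $w\in\{(12)^n,(21)^n\}$, whose unique maximal Dyck factor is $w$ itself (every up-step being immediately followed by a down-step), so $\phi_\alpha(w)=(1+n)^\alpha$. The main obstacle is this last step --- producing the nested maximal Dyck factor when $w$ is not a ground state; everything else reduces to the elementary estimates above.
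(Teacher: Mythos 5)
Your proof is correct. The lower bound and the equality analysis follow essentially the same route as the paper (decompose into maximal height-$0$ Dyck factors, then use concavity/subadditivity of $t\mapsto t^\alpha$ to compare $\sum(1+|v_j|_1)^\alpha$ with $(1+n)^\alpha$, with your equality case merely spelling out the paper's terse final sentence by explicitly exhibiting a height-$1$ maximal Dyck factor whenever a positive configuration is not $(12)^n$). For the upper bound, however, you take a genuinely different and more elementary route: the paper proves by induction on the length of positive configurations that $\phi_\alpha(v)\le(1+|v|_1)^{\alpha+1}$, using the convexity of $t\mapsto t^{\alpha+1}$ at each level of the nested decomposition, and then assembles the height-$0$ pieces; you instead bound each individual summand by $(2n)^\alpha$ and bound the cardinality of $\textrm{DF}(w)$ by $2n$ via the observation that two distinct maximal Dyck factors cannot share a starting position (same start implies same prefix height; same height plus nesting contradicts maximality; opposite signs force height $0$ and then the first letter would have to be both $1$ and $2$). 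Your counting argument is shorter and avoids induction, at the cost of losing the slightly sharper intermediate bound $(q+|w|_1)^{\alpha+1}$ that the paper obtains before relaxing to $(2n)^{\alpha+1}$; since the lemma only asserts $(2n)^{\alpha+1}$, nothing is lost for the result as stated.
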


\begin{proof}
Let us first focus on the upper bound.
We prove by induction on $|v|$ that, for any positive configuration $v$,
$$
\phi_\alpha(v)\leq (1+|v|_1)^{\alpha+1}.
$$
The inequality is satisfied for $n=2$ since one has $\phi_\alpha(12)=2^\alpha$.
Let us now consider a configuration $v$ of length $n$, and assume that the result holds for any shorter positive configuration.
Let $u_1,\ldots,u_p$ be the maximal (positive) Dyck factors of $v$ of height $1$.
One thus has
$$
\phi_\alpha(v)
=(1+|v|_1)^\alpha+\sum_{i=1}^p\phi_\alpha(u_i)
\leq (1+|v|_1)^\alpha+\sum_{i=1}^p(1+|u_i|_1)^{\alpha+1}.
$$
We then use the convexity of $x\to x^{\alpha+1}$ and compute
$$
\phi_\alpha(v)
\leq (1+|v|_1)^\alpha+\left(\sum_{i=1}^p 1+|u_i|_1\right)^{\alpha+1}
=(1+|v|_1)^\alpha+|v|_1^{\alpha+1}
\leq (1+|v|_1)^{\alpha+1}.
$$
The claimed result follows by induction.
It also holds for negative configurations.\\
Let us now consider the case of a general configuration $w$.
Let $v_1,\ldots,v_q$ be its maximal Dyck factors of height $0$.
Since they are alternatively positive and negative configurations, one relies one the previous result to compute
$$
\phi_\alpha(w)
=\sum_{i=1}^q\phi_\alpha(v_i)
\leq\sum_{i=1}^q(1+|v_i|_1)^{\alpha+1}
\leq\left(\sum_{i=1}^q 1+|v_i|_1\right)^{\alpha+1}
=(q+|w|_1)^{\alpha+1}.
$$
Since $q\leq|w|_2$, this yields $\phi_\alpha(w)\leq |w|^{\alpha+1}=(2n)^{\alpha+1}$, hence the upper bound.\\

Let us now focus on the lower bound.
Let $w$ be a configuration and $v_1,\ldots,v_q$ be its maximal Dyck factors of height $0$.
By relying on the concavity of $x\to x^\alpha$, one computes
$$
\phi_\alpha(w)
=\sum_{i=1}^q\phi_\alpha(v_i)
=\sum_{i=1}^q(1+|v|_1)^\alpha
\geq \left(\sum_{i=1}^q1+|v|_1\right)^\alpha
=(q+|w|_1)^\alpha.
$$
Since $|w|_1=n$ and $q\geq 1$, this proves the claimed lower bound.
Moreover, if this bound is reached, then $q=1$, that is, $w$ is either positive or negative.
In both cases, $w$ has a maximal Dyck factor with $n$ letters $1$, which has weight $(1+n)^\alpha$.
This ensures that there is no other maximal Dyck factors.
Hence, $w$ is one of the configuration $(12)^n$ or $(21)^n$, which are exactly the mismatch-free ones and are both mapped onto $(1+n)^\alpha$ by $\phi_\alpha$.
This proves the claimed result.\hfill\qed
\end{proof}

\noindent This bound is optimal up to a factor $2^{\alpha+1}$ since one computes $\phi_\alpha(1^n2^n)=n^{\alpha+1}$.\\

\noindent Th. \ref{th:convergence1} then follows from Prop. \ref{prop:fmst} with $\varepsilon=\frac{\alpha(1-\alpha)}{2}n^{\alpha-2}$ and $f_0=\phi_\alpha(w_0)\leq n^{\alpha+1}$:
$$
\mathbb{E}(\widehat{T}(n))\leq n^{\alpha+1}\times \frac{2n^{2-\alpha}}{\alpha(1-\alpha)}=\frac{2}{\alpha(1-\alpha)}n^3.
$$
Here, the best value for $\alpha$ is $\frac{1}{2}$.
Other values are useful only in the next section.

\section{Bounding the weighted expected convergence time}
\label{sec:convergence2}

Given a distribution $\mu$ over $\mathcal{W}_n$, one deduces from Prop. \ref{prop:fmst} a bound on the $\mu$-weighted expected convergence time, with $\varepsilon=\frac{\alpha(1-\alpha)}{2}n^{\alpha-2}$ (Lem. \ref{lem:decreasing_variant}):
$$
T_\mu(n)\leq\frac{1}{\varepsilon}\sum_{w\in\mathcal{W}_n}\mu(w)\phi_\alpha(w).
$$
Thus, to bound $T_\mu(n)$, we just need to compute the $\mu$-weighted value of $\phi_\alpha$.
It is however generally easier to compute the $\mu$-weighted value of the volume.
The volume $V(w)$ of a configuration $w$ is the area between its representation and the horizontal axis (for example, Fig. \ref{fig:flip}, both configurations have volume $15$).
One can then deduce bound on the $\mu$-weighted value of $\phi_\alpha$ from the simple following inequality, which holds for any $\alpha\in(0,1)$ and for any configuration $w$:
$$
\phi_\alpha(w)\leq V(w).
$$

For the uniform distribution $\upsilon$ over $\mathcal{W}_n$, the $\upsilon$-weighted value of the volume is known to be equivalent to $\frac{\sqrt{2\pi}}{8}n\sqrt{n}$ (see, \emph{e.g.}, \cite{flajolet}, p. 533).
This allows to prove:

\begin{theorem}\label{th:convergence2}
The uniformly-weighted expected convergence time $T_\upsilon$ satisfies
$$
T_\upsilon(n)=O(n^{5/2}\ln{n}).
$$
\end{theorem}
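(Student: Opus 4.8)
The plan is to combine the general bound $T_\mu(n)\le \frac{1}{\varepsilon}\sum_w \mu(w)\phi_\alpha(w)$ with $\varepsilon=\frac{\alpha(1-\alpha)}{2}n^{\alpha-2}$ from Lemma~\ref{lem:decreasing_variant}, the pointwise inequality $\phi_\alpha(w)\le V(w)$, and the known asymptotics $\sum_w \upsilon(w)V(w)\sim \frac{\sqrt{2\pi}}{8}n\sqrt{n}$. Plugging these together gives
$$
T_\upsilon(n)\le \frac{2}{\alpha(1-\alpha)}n^{2-\alpha}\sum_{w\in\mathcal{W}_n}\upsilon(w)\phi_\alpha(w)\le \frac{2}{\alpha(1-\alpha)}n^{2-\alpha}\cdot O(n^{3/2})=O\!\left(\frac{1}{\alpha(1-\alpha)}n^{7/2-\alpha}\right).
$$
The exponent is minimized by taking $\alpha$ as large as possible, i.e. $\alpha\to 1$, but then the constant $\frac{1}{\alpha(1-\alpha)}$ blows up. Balancing the two effects is what produces the extra logarithmic factor: choosing $\alpha=1-\frac{1}{\ln n}$ makes $n^{7/2-\alpha}=n^{5/2}\cdot n^{1/\ln n}=e\cdot n^{5/2}$ and $\frac{1}{\alpha(1-\alpha)}=\Theta(\ln n)$, yielding $T_\upsilon(n)=O(n^{5/2}\ln n)$.

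The steps, in order, would be: first, establish the pointwise inequality $\phi_\alpha(w)\le V(w)$. This should be a short induction along the decomposition of $w$ into its maximal height-zero Dyck factors, then along heights within each factor: a maximal Dyck factor with $k$ letters $1$ contributes $(1+k)^\alpha$ to $\phi_\alpha$, which is bounded by its own area since the area of such a factor is at least its length $2k\ge 1+k$ for $k\ge 1$, and the areas of the sub-Dyck-factors at the next height level add up separately and fit under the remaining area. Second, cite the asymptotic $\sum_w \upsilon(w)V(w)=\frac{\sqrt{2\pi}}{8}n^{3/2}(1+o(1))$ from \cite{flajolet}; in particular there is a constant $C$ with $\sum_w\upsilon(w)V(w)\le C n^{3/2}$ for all $n$. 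Third, assemble the chain of inequalities above to get $T_\upsilon(n)\le \frac{2C}{\alpha(1-\alpha)}n^{7/2-\alpha}$ for every $\alpha\in(0,1)$. Fourth, optimize: set $\alpha=1-1/\ln n$ (valid for $n\ge 3$), note $n^{7/2-\alpha}=n^{5/2}n^{1/\ln n}=e\,n^{5/2}$ and $\frac{1}{\alpha(1-\alpha)}\le \frac{2\ln n}{1}$ for $n$ large, and conclude $T_\upsilon(n)=O(n^{5/2}\ln n)$.

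The main obstacle is the inequality $\phi_\alpha(w)\le V(w)$, or rather making its induction airtight. One must be careful that the volume of a configuration really does dominate the sum over all nested maximal Dyck factors of $(1+|v|_1)^\alpha$ and not just the top-level term; the point is that a maximal Dyck factor of length $2k$ has area at least $2k$ (each unit step of the broken line lies at height $\ge 1$ in absolute value over that stretch, so contributes a unit trapezoid of area $\ge 1$), which already beats $(1+k)^\alpha\le (2k)^\alpha\le 2k$, and the sub-factors at strictly greater height have their areas disjointly accounted for under the part of the region that lies above height $1$. A clean way to phrase this is: for a positive configuration $v$, $V(v)\ge |v|+\sum_i V(u_i)$ where the $u_i$ are its maximal Dyck factors of height $1$, since each of the $|v|$ unit-width columns under $v$ contributes area $\ge 1$ from the part below height $1$, plus whatever lies above; then induction against $\phi_\alpha(v)=(1+|v|_1)^\alpha+\sum_i\phi_\alpha(u_i)\le |v|+\sum_i V(u_i)\le V(v)$, using $(1+|v|_1)^\alpha\le 1+|v|_1\le |v|$. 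Once this is in hand, everything else is the routine assembly and the $\alpha$-optimization sketched above.

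\begin{proof}
By Prop.~\ref{prop:fmst} and Lem.~\ref{lem:decreasing_variant} with $\varepsilon=\frac{\alpha(1-\alpha)}{2}n^{\alpha-2}$, we have for every $\alpha\in(0,1)$
$$
T_\upsilon(n)\leq\frac{2}{\alpha(1-\alpha)}n^{2-\alpha}\sum_{w\in\mathcal{W}_n}\upsilon(w)\phi_\alpha(w).
$$
We first prove that $\phi_\alpha(w)\leq V(w)$ for every configuration $w$ and every $\alpha\in(0,1)$. We argue by induction on the length for positive configurations $v$: the unit-width columns under the broken line of $v$ number $|v|$, and over a positive configuration the line stays at height $\geq 1$, so the part of the region lying below the line $y=1$ already contributes area $\geq|v|$; the remaining area decomposes into the volumes of the maximal Dyck factors $u_1,\ldots,u_p$ of $v$ of height $1$. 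Hence $V(v)\geq |v|+\sum_{i=1}^p V(u_i)$. Since $(1+|v|_1)^\alpha\leq 1+|v|_1\leq |v|$ and, by the induction hypothesis, $\phi_\alpha(u_i)\leq V(u_i)$, we get
$$
\phi_\alpha(v)=(1+|v|_1)^\alpha+\sum_{i=1}^p\phi_\alpha(u_i)\leq |v|+\sum_{i=1}^p V(u_i)\leq V(v).
$$
The same holds for negative configurations by symmetry. For a general configuration $w$ with maximal height-zero Dyck factors $v_1,\ldots,v_q$, both $\phi_\alpha$ and $V$ are additive over the $v_i$, so $\phi_\alpha(w)=\sum_i\phi_\alpha(v_i)\leq\sum_i V(v_i)=V(w)$.

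Therefore $\sum_w \upsilon(w)\phi_\alpha(w)\leq\sum_w\upsilon(w)V(w)$, and by \cite{flajolet} (p.~533) the latter is equivalent to $\frac{\sqrt{2\pi}}{8}n\sqrt{n}$; in particular there is a constant $C>0$ with $\sum_w\upsilon(w)V(w)\leq Cn^{3/2}$ for all $n$. Combining,
$$
T_\upsilon(n)\leq\frac{2C}{\alpha(1-\alpha)}\,n^{2-\alpha}\,n^{3/2}=\frac{2C}{\alpha(1-\alpha)}\,n^{7/2-\alpha}.
$$
Finally we choose $\alpha$ depending on $n$: take $\alpha=1-\frac{1}{\ln n}$ for $n\geq 3$, so that $n^{7/2-\alpha}=n^{5/2}\,n^{1/\ln n}=e\,n^{5/2}$, while $\frac{1}{\alpha(1-\alpha)}=\frac{\ln n}{1-1/\ln n}=O(\ln n)$. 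This yields
$$
T_\upsilon(n)\leq 2Ce\,n^{5/2}\cdot O(\ln n)=O(n^{5/2}\ln n),
$$
as claimed.\hfill\qed
\end{proof}
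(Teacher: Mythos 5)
Your strategy is the same as the paper's: apply Prop.~\ref{prop:fmst} with $\varepsilon=\frac{\alpha(1-\alpha)}{2}n^{\alpha-2}$ from Lem.~\ref{lem:decreasing_variant}, pass from $\phi_\alpha$ to $V$ pointwise, invoke the known asymptotic $\sum_w\upsilon(w)V(w)\sim\frac{\sqrt{2\pi}}{8}n^{3/2}$, and optimize at $\alpha=1-\frac{1}{\ln n}$. Unlike the paper, you try to actually prove the pointwise inequality $\phi_\alpha(w)\leq V(w)$ --- and you were right to flag it as the delicate step --- but the proof you give does not hold up, and in fact the inequality as stated is false.

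The inductive step $V(v)\geq |v|+\sum_i V(u_i)$ fails. The broken line of a positive configuration does \emph{not} stay at height $\geq 1$: it starts and ends at height $0$ and may return to $0$ in between, so any column touching a zero contributes area only $\tfrac12$, not $\geq 1$. Concretely $V(1212)=2$, yet $|1212|=4$. The preliminary claim that a maximal Dyck factor of length $2k$ has area at least $2k$ fails for the same reason: $V((12)^k)=k$, not $2k$. So the induction is broken from its base.

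The deeper problem is that $\phi_\alpha(w)\leq V(w)$ is itself not true. Take $w=1221\in\mathcal{W}_4$: its maximal Dyck factors are the positive $12$ and the negative $21$, each with a single letter $1$, so $\phi_\alpha(w)=2\cdot 2^\alpha$, while $V(w)=2$. Hence $\phi_\alpha(w)>V(w)$ for \emph{every} $\alpha\in(0,1)$. The paper asserts the inequality without proof, so this is a gap it shares; your proposal inherits it. What the argument actually requires is only $\sum_w\upsilon(w)\phi_\alpha(w)=O(n^{3/2})$. This averaged bound may well be salvageable --- for instance a corrected pointwise bound such as $\phi_\alpha(w)\leq V(w)+Cn$ or $\phi_\alpha(w)\leq 2V(w)$ would leave the final $O(n^{5/2}\ln n)$ intact --- but establishing such a bound is precisely the missing work, and neither the paper nor your write-up supplies it.
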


\begin{proof}
For any $\alpha\in(0,1)$, one has
$$
T_\upsilon(n)
\leq\frac{2n^{2-\alpha}}{\alpha(1-\alpha)}\times \sum_{w\in\mathcal{W}_n}\upsilon(w)V(w)
=\frac{\sqrt{2\pi}}{4\alpha(1-\alpha)}n^{7/2-\alpha}.
$$
One checks that this upper bound, seen as a function of $\alpha$, is minimal in $1-\frac{1}{\ln{n}}$.
For such an $\alpha$, one has $n^{7/2-\alpha}=n^{1/\ln{n}} n^{5/2}=\mathrm{e} n^{5/2}$.
This yields
$$
T_\upsilon(n)
\leq \frac{\sqrt{2\pi}}{4(1-\frac{1}{\ln{n}})\frac{1}{\ln{n}}}\mathrm{e} n^{5/2}
= \frac{\sqrt{2\pi} \mathrm{e}}{4(1-\frac{1}{\ln{n}})}n^{5/2}\ln{n}
=O(n^{5/2}\ln{n}).
$$
This shows the claimed result.\hfill\qed
\end{proof}

However, recall from the introduction that the context of this problem is the cooling of a quasicrystal from melt at very high temperature.
In particular, the natural distributions over initial configurations should be the one of the melt.
In the melt, the very high temperature imposes a very low threshold for  $\Delta E$, so that \emph{all} the flips are equiproblable, with no flip being any more forbidden.\\

Hence, we define the \emph{natural distribution} $\nu$ over $\mathcal{W}_n$ as the stationary distribution of the process which, at each step, performs uniformly at random one of the possible flips, without any restriction (contrarily to the cooling process described in Sec. \ref{sec:settings}).
It is a classical result of random walks on graph that this stationary distribution gives to each configuration a weight proportional to the number of flips which can be performed onto.
In particular, the configurations of maximal weight are $(12)^n$ and $(21)^n$, \emph{i.e.}, the stable configurations of the cooling process, while the configurations of minimal weight are $1^n2^n$ and $2^n1^n$, \emph{i.e.}, the worst configurations of the cooling process.
One can thus hope that the naturally-weighted expected convergence time $T_\nu$ is lower than the uniformly-weighted one.
But we only get a similar bound, because the naturally-weighted value of the volume has the same growth order as its uniformly-weighted value:

\begin{proposition}
The naturally-weighted value of the volume satisfies
$$
\sum_{w\in\mathcal{W}_n} \nu(w)V(w)\sim\frac{\sqrt{2\pi}}{8}n\sqrt{n}.
$$
\end{proposition}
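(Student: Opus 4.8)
The plan is to make the stationary weight $\nu$ explicit, reduce the statement to a ratio of two sums over $\mathcal{W}_n$, evaluate the denominator exactly, and control the numerator by showing that the energy $E$ and the volume $V$ are asymptotically uncorrelated under the uniform distribution $\upsilon$.

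First I would use the description recalled just above the statement: $\nu(w)$ is proportional to the number of flips applicable to $w$, which is exactly $(n-1)-E(w)$, since a flip is available precisely at the positions $i$ with $w_i\neq w_{i+1}$, and there are $n-1$ positions in all. Hence
$$
\sum_{w\in\mathcal{W}_n}\nu(w)V(w)=\frac{\sum_{w\in\mathcal{W}_n}\bigl((n-1)-E(w)\bigr)V(w)}{\sum_{w\in\mathcal{W}_n}\bigl((n-1)-E(w)\bigr)}.
$$
The denominator is evaluated exactly by linearity of expectation under $\upsilon$: a uniformly random configuration satisfies $w_i=w_{i+1}$ with probability $\frac{n/2-1}{n-1}$ by an elementary binomial count, so $\mathbb{E}_\upsilon[E]=\frac{n}{2}-1$, hence $\sum_{w}E(w)=(\frac n2-1)|\mathcal{W}_n|$ and the denominator equals $\frac n2\,|\mathcal{W}_n|$. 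Writing $\bar V:=\frac1{|\mathcal{W}_n|}\sum_{w}V(w)\sim\frac{\sqrt{2\pi}}{8}n\sqrt n$ for the uniformly weighted volume (the estimate cited above), the identity becomes $\sum_{w}\nu(w)V(w)=\frac{2}{n}\bigl((n-1)\bar V-\mathbb{E}_\upsilon[EV]\bigr)$; substituting $\mathbb{E}_\upsilon[EV]=\mathbb{E}_\upsilon[E]\,\mathbb{E}_\upsilon[V]+\mathrm{Cov}_\upsilon(E,V)=(\frac n2-1)\bar V+\mathrm{Cov}_\upsilon(E,V)$ and simplifying gives
$$
\sum_{w\in\mathcal{W}_n}\nu(w)V(w)=\bar V-\frac{2}{n}\,\mathrm{Cov}_\upsilon(E,V).
$$

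The crux is therefore to prove $\mathrm{Cov}_\upsilon(E,V)=o(n^{5/2})$; then $\frac2n\,\mathrm{Cov}_\upsilon(E,V)=o(n^{3/2})$ is negligible against $\bar V$ and the claim follows. To bound the covariance I would apply Cauchy--Schwarz, $|\mathrm{Cov}_\upsilon(E,V)|\le\sigma_\upsilon(E)\,\sigma_\upsilon(V)$, and estimate the two standard deviations separately. For the volume, write $V(w)=\sum_{k=1}^{n-1}|h_k|$, where $h_k$ denotes the height of the broken line after $k$ steps; since $h_k$ is centred with $\mathbb{E}_\upsilon[h_k^2]=\frac{k(n-k)}{n-1}=O(n)$, Minkowski's inequality gives $\sigma_\upsilon(V)\le\sum_{k=1}^{n-1}\|h_k\|_2=O(n^{3/2})$. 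For the energy, write $E=\sum_{i=1}^{n-1}X_i$ with $X_i=1$ if $w_i=w_{i+1}$ and $X_i=0$ otherwise; the $X_i$ are $2$-dependent apart from the global constraint $|w|_1=|w|_2$, whose effect on each pairwise covariance is only $O(1/n)$, so summing over the $O(n^2)$ pairs yields $\mathrm{Var}_\upsilon(E)=O(n)$ and $\sigma_\upsilon(E)=O(\sqrt n)$. Therefore $|\mathrm{Cov}_\upsilon(E,V)|=O(n^2)=o(n^{5/2})$, which finishes the argument.

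I expect the only genuine obstacle to be the variance estimate $\mathrm{Var}_\upsilon(E)=O(n)$: the near-diagonal terms ($|i-j|\le 1$) reduce to a short computation, but one must check carefully that the long-range covariances $\mathrm{Cov}_\upsilon(X_i,X_j)$ are really $O(1/n)$, so that their quadratically many contributions still add up to $O(n)$. This can be done with standard sampling-without-replacement (hypergeometric) estimates, or, along a purely combinatorial route, by evaluating $\sum_w E(w)^2$ exactly through the same binomial counting already used for $\sum_w E(w)$. Everything else is bookkeeping around the cited asymptotics $\bar V\sim\frac{\sqrt{2\pi}}{8}n\sqrt n$.
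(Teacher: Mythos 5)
Your proposal is correct, and it takes a genuinely different route from the paper. The paper's proof is a piece of analytic combinatorics: it encodes positive configurations by a bivariate generating function $f(z,u,v)$ marking size, number of admissible flips, and volume, builds the full generating function $g$ out of $f$, derives functional equations, extracts $\partial_u g|_{u=v=1}$ and $\partial^2_{uv}g|_{u=v=1}$ in closed form, and finishes by singularity analysis to get the ratio of coefficients. Your argument is instead probabilistic: you make the stationary weight explicit as $\nu(w)\propto (n-1)-E(w)$, observe that $\sum_w\nu(w)V(w)=\bar V-\tfrac{2}{n}\,\mathrm{Cov}_\upsilon(E,V)$ after the exact computation $\mathbb{E}_\upsilon[E]=n/2-1$ (which the hypergeometric count $P(w_i=w_{i+1})=\tfrac{n/2-1}{n-1}$ indeed gives), and then reduce the whole statement to a second-moment bound on the correction term. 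This is a neat decoupling of the problem and is arguably more transparent than the paper's route, at the cost of yielding only the leading asymptotic rather than exact generating functions (which the paper's method provides and which would allow error terms or further moments).

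The one step you flag as a possible obstacle, namely $\mathrm{Var}_\upsilon(E)=O(n)$, is in fact fine and even a bit better than you conjecture. A direct hypergeometric computation with $m=n/2$ gives, for non-overlapping pairs $|i-j|\ge 2$,
$$
\mathrm{Cov}_\upsilon(X_i,X_j)=\frac{2m^2-m}{(4m^2-8m+3)(4m^2-4m+1)}=\Theta\!\left(\frac{1}{n^2}\right),
$$
so the $\binom{n-1}{2}$ long-range terms contribute only $O(1)$; the $O(n)$ adjacent pairs contribute $\mathrm{Cov}_\upsilon(X_i,X_{i+1})=-\tfrac{m}{2(2m-1)^2}=\Theta(1/n)$ each, hence $O(1)$ total; and the diagonal contributes $\Theta(n)$. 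Thus $\mathrm{Var}_\upsilon(E)=\Theta(n)$, your Minkowski bound $\sigma_\upsilon(V)\le\sum_k\|h_k\|_2=O(n^{3/2})$ with $\|h_k\|_2^2=\tfrac{k(n-k)}{n-1}$ is correct, and Cauchy--Schwarz gives $|\mathrm{Cov}_\upsilon(E,V)|=O(n^2)$, so $\tfrac{2}{n}\,\mathrm{Cov}_\upsilon(E,V)=O(n)=o(n^{3/2})$ is indeed negligible against $\bar V\sim\tfrac{\sqrt{2\pi}}{8}n\sqrt n$. The argument closes.
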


\begin{proof}
We first introduce a combinatorial enumerative series $f$ describing only the positive configurations.
Let $a_{n,p,q}$ denotes the number of positive configurations of size $2n$, volume $q$ and onto which $p$ flips can be performed.
We set:
$$
f(z,u,v)=\sum_{(n,p,q)\in\mathbb{N}^3}a_{n,p,q}z^nu^pv^q.
$$
Let us find a functional equation for $f$.
Each positive configuration $w$ can be written $w=1u2v$, where both $u$ and $v$ are positive configurations, possibly empty.
The volume satisfies $V(w)=V(u)+|u|+1+V(v)$.
The number $F$ of flips satisfies either $F(w)=F(u)$ if $|v|=0$, or $F(w)=F(u)+1+F(v)$ otherwise.
This leads, using symbolic methods described in \cite{flajolet}, to the functional equation
\begin{equation}\label{eq:f}
f(z,u,v)=(zuv+zvf(zv,u,v))(1+uf(z,u,v)).
\end{equation}
We now introduce a similar combinatorial enumerative series $g$ describing all the configurations.
Each configuration $w$ can be written $w=v_1\cdots v_p$, where the $v_i$'s are alternating positive or negative configurations, hence described by $f$.
Once again, symbolic methods described in \cite{flajolet} thus lead to the functional equation
\begin{equation}\label{eq:g}
g(z,u,v)=\dfrac{2z(vf(zv,u,v)+uv)}{1-(zuv+zu^2v+zvf(zv,u,v)+zuv)}.
\end{equation}
The naturally-weighted value of the volume can then be derived from $g$ as follows:
$$
\sum_{w\in\mathcal{W}_{2n}} \nu(w)V(w)=\dfrac
{[z^n]{\frac {\partial ^{2}}{\partial v\partial u}}g\left(z,u,v\right)|_{u=1,v=1}}
{[z^n]{\frac {\partial }{\partial u}}g \left( z,u,v \right)|_{u=1,v=1}},
$$
where $[z^n]h$ denotes the coefficient of $z^n$ in the series $h$.
Let us compute this.\\

We first consider the denominator.
The differentiation of (\ref{eq:g}) with respect to $u$ yields for $\frac{\partial g}{\partial u}(z,u,v)$ a huge formula where appear $f(zv,u,v)$ and $\frac{\partial }{\partial u}f(zv,u,v)$.
On the one hand, $f(z,1,1)$ is simply the usual series describing Dyck paths:
$$
f(z,1,1)=\frac{1-\sqrt{1-4z}}{2z}.
$$
On the other hand, the differentiation of (\ref{eq:f}) with respect to $u$ yields
\begin{align*}
\frac{\partial f}{\partial u}(z,u,v)=
&\left(zv+vz\frac{\partial f}{\partial u}(zv,u,v)\right) \left(1+uf(z,u,v)\right)\\
&+ \left(zuv+zvf(zv,u,v)\right) \left(f(z,u,v)+u\frac{\partial f}{\partial u}(z,u,v)\right).
\end{align*}
Since $f(z,1,1)$ is known, this allows to compute
$$
\frac{\partial f}{\partial u}(z,u,v)|_{u=1,v=1}=\frac{\partial f}{\partial u}(zv,u,v)|_{u=1,v=1}=\frac{1-\sqrt{1-4z}-2z}{2z\sqrt{1-4z}}.
$$
Working this out in the equation for $\frac{\partial g}{\partial u}(z,u,v)$, one gets
$$
D(z):=\frac{\partial g}{\partial u}(z,u,v)|_{u=1,v=1}=\frac{2z}{(1-4z)^{3/2}}.
$$
For the numerator, a similar but even more tedious computation yields
$$
N(z):=\frac{\partial^2 g}{\partial u\partial v}(z,u,v)|_{u=1,v=1}=\frac{16z^3+4z^2+2z}{(1-4z)^3}.
$$
Both $D(z)$ and $N(z)$ are $\Delta$-analytic at $z=\frac{1}{4}$.
In this point, one computes
$$
D(z)\sim \frac{1}{2(1-4z)^{3/2}}
\qquad\textrm{and}\qquad
N(z)\sim\frac{1}{(1-4z)^3}.
$$
Asymptotics of the coefficient of $z^n$ in the series expansions of $D$ and $N$ can then be deduced from the following result (see \emph{e.g.}\cite{flajolet}, Chap. VI), which holds for $\alpha\notin\{-1,-2,\ldots\}$ and $\rho\in\mathbb{C}$:
$$
[z^n]\left(1-\frac{z}{\rho}\right)^{-\alpha}\sim \frac{1}{\Gamma(\alpha)}\rho^{-n}n^{\alpha-1}.
$$
Indeed, with $\rho$ equals to $\frac{1}{4}$ and $\alpha$ respectively equals to $\frac{3}{2}$ and $3$, this yields:
$$
[z^n]D(z)\sim\frac{1}{2\Gamma(3/2)}4^n\sqrt{n}
\qquad\textrm{and}\qquad
[z^n]N(z)\sim\frac{1}{\Gamma(3)}4^nn^2.
$$
One thus finally computes
$$
\sum_{w\in\mathcal{W}_{2n}} \nu(w)V(w)=\dfrac{[z^n]N(z)}{[z^n]D(z)}\sim\frac{2\Gamma(3/2)}{\Gamma(3)}n\sqrt{n}=\frac{\sqrt{\pi}}{2}n\sqrt{n}.
$$
The claimed result follows by replacing $2n$ by $n$ (the sum is taken over $\mathcal{W}_{n}$)
\hfill\qed
\end{proof}

\section{Perspectives}

Th. \ref{th:convergence1} and \ref{th:convergence2} provide \emph{upper} bounds on the expected convergence time of the cooling process, respectively in the worst case and in the average case (uniform and natural distributions).
Experiments suggest, especially in the worst case, that these bounds are tight (see Fig. \ref{fig:stats_pire_cas} and \ref{fig:stats_cas_moyen}).
This remains to be rigorously proven.\\

Moreover, this paper focuses only on the expectation of the convergence time, as the physically most significant moment.
It would be worth to consider also higher moments, in order to obtain the limit distribution of the convergence time.\\

Last but not least, two-letter words correspond to tilings of dimension one and codimension one.
But the physically interesting cases have dimension two or three and codimension at least two.
For example, the celebrated \emph{Penrose tiling} has dimension two and codimension three.
The analysis of these cases seems harder but rewarding: experiments show much faster convergence rates (see \cite{aperiodic}).


\begin{figure}[hbtp]
\centering
\includegraphics[width=\textwidth]{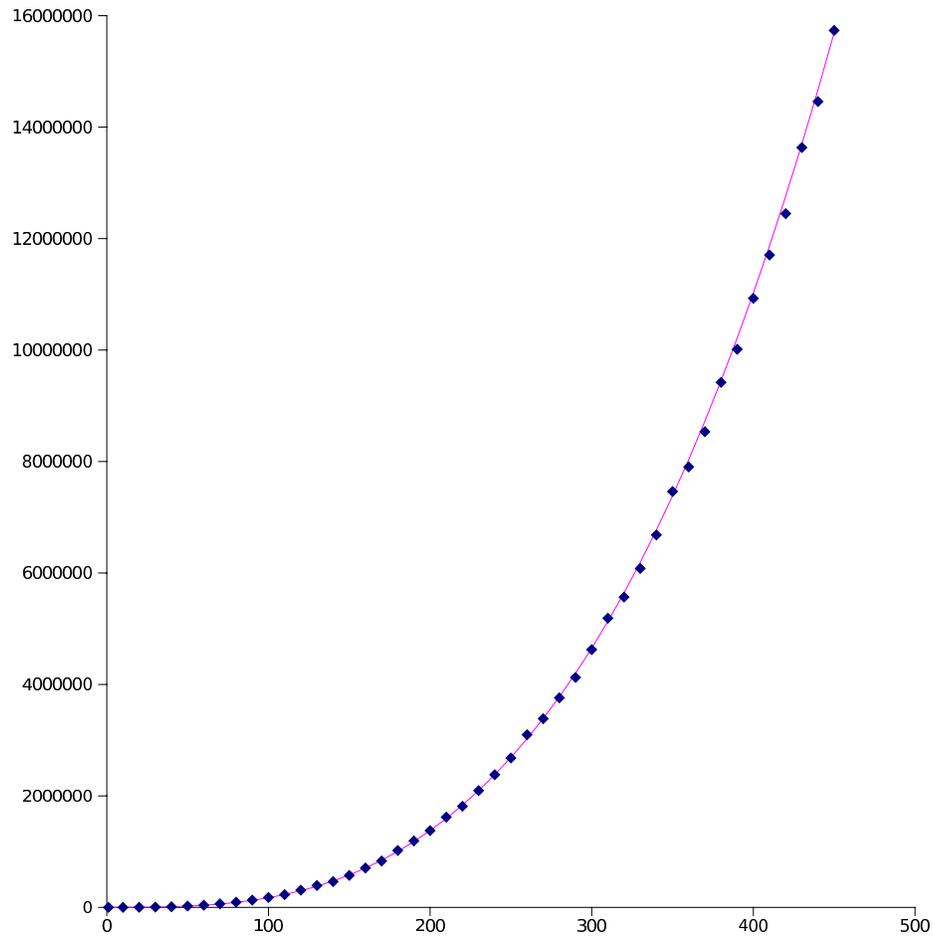}
\caption{
The blue points represent the average over $10$ simulations of the worst convergence time (on the $x$-axis, the length $n$ of configurations).
The pink curve $0.17n^3$ fits remarkably well with experimental values.
The standard deviation of the worst convergence time is experimentally very small, so that only $10$ simulations already provide a good approximation of its expectation.
}
\label{fig:stats_pire_cas}
\end{figure}

\begin{figure}[hbtp]
\centering
\includegraphics[width=\textwidth]{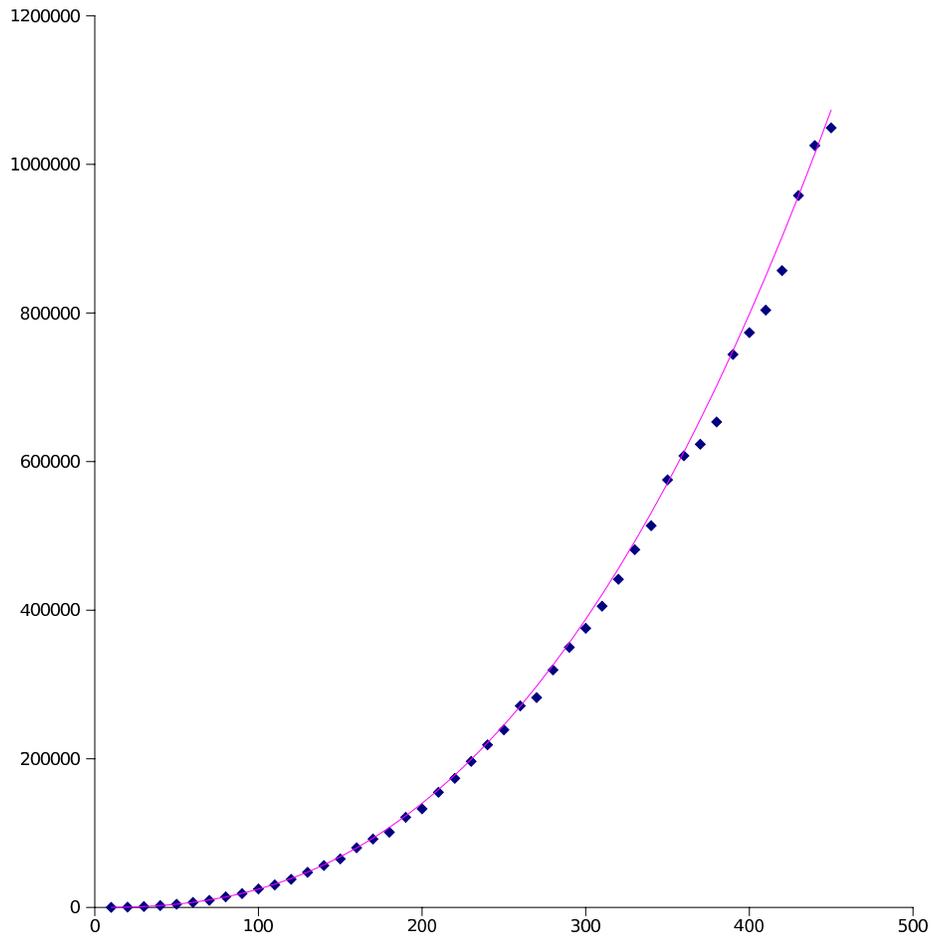}
\caption{
The blue points represent the average over $1000$ simulations of the uniformly-weighted convergence time (on the $x$-axis, the length $n$ of configurations).
The pink curve $0.24n^{5/2}\ln{n}$ fits rather well with experimental values.
The standard deviation of the uniformly-weighted convergence time is experimentally much greater than for the worst convergence time (indeed, each simulation starts from a configuration chosen uniformly at random), so that the number of simulations needed to get a good approximation is much greater too.
}
\label{fig:stats_cas_moyen}
\end{figure}



\begin{thebibliography}{16} 
\bibitem{aperiodic} O. Bodini, Th. Fernique, D. Regnault, \emph{Crystallization by stochastic flips}, Journal of Physics: Conference series {\bf 226} (2010).
\bibitem{fmst} N. Fat\`es, M. Morvan, N. Schabanel, \'E. Thierry, \emph{Fully asynchronous behavior of double-quiescent elementary cellular automata}, Theoretical Computer Science {\bf 362} (2006), pp. 1--16.
\bibitem{flajolet} Ph. Flajolet, R. Sedgewick, \emph{Analytic combinatorics}, Cambridge Univ. Press, 2009.
\bibitem{janot} C. Janot, \emph{Quasicrystals: The State of the art}, Oxford University Press, 1992.
\end{thebibliography}
\end{document}